\definecolor{rltred}{rgb}{0.75,0,0}
\definecolor{rltgreen}{rgb}{0,0.5,0}
\definecolor{rltblue}{rgb}{0,0,0.75}
\newcommand{\R}{\mathbb{R}}
\newcommand{\N}{\mathbb{N}}
\renewcommand{\div}{\operatorname{div}}
\newcommand{\Du}{\mathbf{Du}}
\newcommand{\di}{\partial_i}
\renewcommand{\dj}{\partial_j}
\newcommand{\eps}{\varepsilon}
\newcommand{\dx}{\,\textrm{d}\mathbf{x}}
\newcommand{\x}{\mathbf{x}}
\newcommand{\xk}{\mathbf{x}^k}
\newcommand{\y}{\mathbf{y}}
\newcommand{\xj}{x_1}
\newcommand{\xd}{x_2}
\newcommand{\xjk}{x_1^k}
\newcommand{\xdk}{x_2^k}
\newcommand{\xnk}{x_n^k}
\renewcommand{\u}{\mathbf{u}}
\newcommand{\w}{\mathbf{w}}
\newcommand{\g}{\mathbf{g}}
\newcommand{\h}{\mathbf{h}}
\newcommand{\nequiv}{\not\equiv}
\newcommand{\diam}{\operatorname{diam}}
\newcommand{\Tr}{\operatorname{Tr}}
\theoremstyle{plain}
\newtheorem{thm}{Theorem}
\newtheorem{lem}[thm]{Lemma}
\newtheorem{prop}[thm]{Proposition}
\newtheorem*{thm*}{Theorem}
\theoremstyle{definition}
\newtheorem{df}[thm]{Definition}
\theoremstyle{remark}
\newtheorem{rem}[thm]{Remark}
\newtoks\by
\newtoks\paper
\newtoks\book
\newtoks\jour
\newtoks\yr
\newtoks\pages
\newtoks\vol
\newtoks\publ
\newtoks\eds
\newtoks\proc
\def\ota{{\hbox{???}}}
\def\cLear{\by=\ota\paper=\ota\book=\ota\jour=\ota\yr=\ota
	\pages=\ota\vol=\ota\publ=\ota}
\def\endpaper{\textsc{\the\by}, \textit{\the\paper}.
	{\the\jour} \textbf{\the\vol} (\the\yr), \the\pages.\cLear}
\def\endbook{\textsc{\the\by}, \textit{\the\book}. \the\publ.\cLear}
\def\endprep{\textsc{\the\by}, \textit{\the\paper}. \the\jour.\cLear}
\def\endproc{\textsc{\the\by}, \textit{\the\paper}. \the\publ, \the\pages.\cLear}
\begin{document}

\title[Counterexample]{A counterexample related to the regularity\\ of the $p$-Stokes problem}
\author{Martin K\v repela} \address{Martin K\r repela, 
Institute of Applied Mathematics, University of  
  Freiburg, Eckerstr.~1, D-79104 Freiburg, Germany.}
  \email{martin.krepela@math.uni-freiburg.de} \author
{Michael R\r u\v zi\v cka{}} \address{Michael R\r u\v zi\v cka{},
  Institute of Applied Mathematics, University of 
  Freiburg, Eckerstr.~1, D-79104 Freiburg, Germany.}
\email{rose@mathematik.uni-freiburg.de}

\begin{abstract}
  In this paper we construct a solenoidal vector field $\bu$ belonging
  to $W^{2,q}(\Omega)\cap W^{1,s}_0(\Omega)$, $s \in (1,\infty)$,
  $q \in (1,n)$, such that $(1+|\bD\bu|)^{p-2}$,
  $p\in (1,2)\cup(2,\infty)$, does not belong to the Muckenhoupt class
  $A_\infty(\Omega)$. Thus, one cannot use the Korn inequality in
  weighted Lebesgue spaces to prove the natural regularity of the
  $p$-Stokes problem.
  \\[3mm]
  \textbf{Keywords.} Regularity of $p$-Stokes problem, symmetric
  gradient, boundary regularity, counterexample, Muckenhoupt weights.
  \\[3mm]
  \textbf{2000 MSC.} 76A05 (35D35 35Q35)
\end{abstract}
\date{\today}\maketitle

\centerline{\bf Dedicated to V.V.~Zhikov}

\section{Introduction}
\label{sec:intro}
The question of full natural regularity for the $p$-Stokes system
\begin{equation}
  \label{eq}
  \begin{aligned}
    -\divo \bS(\bD\bu) 
    +\nabla \pi&=\bff\qquad&&\text{in }\Omega,
    \\
    \divo\bu&=0\qquad&&\text{in }\Omega,
  \end{aligned}
\end{equation}
is still not completely solved in the three- and higher-dimensional situation. 
Here $\Omega\subset\setR^n$, $n \ge 2$, is a bounded domain. The
unknowns are the velocity vector field $\bu=(u_1,\ldots,u_n)^\top$ and
the scalar pressure $\pi$, while the external body force
$\bff=(f_1,\ldots,f_n)^\top$ is given. The extra stress tensor $\bfS$
depends only on $\bfD\bu:=\tfrac 12(\nabla\bu+\nabla\bu^\top)$, the
symmetric part of the velocity gradient $\nabla \bu$. Physical
interpretation and discussion of some non-Newtonian fluid models can
be found, e.g., in~\cite{bird}, \cite{mrr}, \cite{ma-ra-model}. The relevant example
which we have in mind is: 
\begin{equation}
  \label{eq:stress}
  \bfS(\bD \bu) = \mu (\delta+\abs{\bD \bu})^{p-2}
  \bD\bu\,, 
\end{equation}
with $p \in\, (1,\infty)$, $\delta > 0$, and $\mu >0$. We restrict
ourselves to the case $\mu=\delta=1$. However, the arguments in our
paper can be adapted such that the results also hold for $\delta>0$.  

Analogously to the regularity theory of nonlinear elliptic systems
(cf.~\cite{acerbi-fusco}, \cite{gia-mod-86}, \cite{giu1}), the
natural regularity for solutions of \eqref{eq} is 
\begin{equation}
  \bF (\bD\bu)\in W^{1,2}(\Omega)\,,\label{eq:reg}
\end{equation}
where
\begin{align}
  \label{eq:def_F}
  \bF(\bD\bu):= \big (1+\abs{\bD\bu} \big )^{\frac
    {p-2}{2}}{\bD\bu} \,.
\end{align}
Note that for $n=2$ the regularity problem for \eqref{eq}, and even for
the $p$-Navier--Stokes problem, is solved (cf.~\cite{KMS2}). In the
following, we concentrate on the three-dimensional situation,
i.e.,~$n=3$. Using the standard approach of divided differences, one can
show that weak solutions of \eqref{eq} satisfy
$\bF(\bD\bu) \in W^{1,2}_\loc (\Omega)$. From this it immediately follows
that \eqref{eq:reg} is
fulfilled for periodic boundary conditions. Unfortunately,
it is the only situation where this optimal
result is known. Despite various efforts (cf.~\cite{Eb00}, 
\cite{hugo-steady}, \cite{Eb2006a}, \cite{hugo-nonflat}, \cite{CG08},
\cite{hugo-troisi}, \cite{Crispo-2009}, \cite{hugo-thin},
\cite{hugo-thin-nonflat}, \cite{luigi-reg},
\cite{hugo-petr-rose}, \cite{br-reg-shearthin}), full regularity near
the boundary is still an open problem for the $p$-Stokes system \eqref{eq} completed with
zero Dirichlet boundary conditions. Many of the existing papers essentially prove 
that, under appropriate assumptions on the  regularity of the
boundary,  for some $q<2$ depending on $p$ there holds 
\begin{equation}\label{eq:part}
  \xi\,\partial _\btau \bF (\bD\bu)\in L^{2}(\Omega)\,,\quad   \xi\,\partial
  _\bnu \bF (\bD\bu)\in L^{q} ( \Omega)\,,
\end{equation}
where $\partial _\bnu$ is the local normal derivative,  $\partial _\btau$ a local tangential
derivative and $\xi \in C_0^\infty(\setR^3)$ an appropriate cut-off
function with support near the boundary $\partial \Omega$. The precise
definition of these quantities using a local description of the
boundary as a graph can be found, e.g.,~in \cite{br-reg-shearthin}. Note
that one can easily derive from \eqref{eq:part} that $\bu \in
W^{2,r}(\Omega)$, for some $r$ depending on $p$, i.e.,~regularity in
terms of Sobolev spaces. Some of the above mentioned papers prove
the Sobolev space regularity directly. All of these papers also contain certain
statements about the regularity of the pressure gradient. 

The reason for the non-optimality of the results near the boundary
lies in a subtle interplay between the dependence of the elliptic operator
on the symmetric part of the velocity gradient and the divergence
constraint resulting in the appearance of the pressure gradient. If
these difficulties are separated, one can prove better results. If
\eqref{eq} is considered without the divergence constraint and the
resulting pressure gradient, it is proved in \cite{SS00},
\cite{br-plasticity} that \eqref{eq:reg} holds for $p \in (1,2)$. If
\eqref{eq} is considered with $\bS$ depending on the full velocity
gradient $\nabla \bu$, it is proved in \cite{CM15} that
$\bu \in W^{2,r}(\setR^3)\cap W^{1,p}_0(\setR^3)$ for some $r>3$,
provided $p<2$ is very close to $2$. On the other hand, it is pointed out in
\cite{hugo-petr-rose}, \cite{br-reg-shearthin} that a Korn-type
inequality 
\begin{equation}\label{eq:korn}
  \int_\Omega (1+\abs{\bD \bu})^{p-2}|\nabla \partial _\tau\bu|^2 \dx \le c\,
  \int_\Omega (1+\abs{\bD \bu})^{p-2}|\bD \partial _\tau\bu|^2 \dx
\end{equation}
would yield the desired optimal results. However, the validity of
\eqref{eq:korn} is not known. It would be granted if a stronger assertion held true,
namely the Korn inequality in
the weighted Lebesgue space $L^2_\omega(\Omega)$ with 
$\omega = (1+\abs{\bD \bu})^{p-2}$. The validity of the
Korn inequality in Lebesgue spaces is proved in \cite{necas-66}, and in
weighted Lebesgue spaces with Muckenhoupt weights in \cite{john}. In the
weighted case, the proof is based on the continuity of the
maximal operator in weighted Lebesgue spaces. This, in turn, is
equivalent to the condition that the weight is a Muckenhoupt weight
(cf.~\cite{Mucken}).  The purpose of this paper is to construct a
solenoidal vector field $\bu \in W^{1,p}_0(\Omega)$ satisfying 
\eqref{eq:reg} but such that $(1+\abs{\bD \bu})^{p-2}$ does not
belong to any Muckenhoupt class for any $p\neq 2$ (cf.~Theorem \ref{14}).

\section{Counterexample}
Throughout the paper we use the standard notation for Lebesgue and
Sobolev spaces. We write $f \lesssim g$ if there exists a positive
constant $c$, depending only on irrelevant parameters, such that
$f \le c\,g$. Moreover, we write $f \approx g$ if both $f \lesssim g$
and $g\lesssim f$ are satisfied.

Let us start with a technical lemma which loosely follows the ideas of standard covering theorems of Vitali or Besicovitch (cf.~\cite[Section 1.5]{evans-gariepy}) It will ensure that the
constructed vector field will be a counterexample on every open subset
of $\Omega$. 
\begin{lem}\label{2}
  Let $n\ge 2$ and let $\Omega\subset\R^n$ be a~bounded or unbounded domain. Then there
  exists a~countable system of nonempty balls $\{E_l\}_{l\in\N}$
  contained in $\Omega$ and satisfying the following conditions:
  \begin{enumerate}
  \item[\rmfamily(i)] the set of all centers of the balls $E_l$ is
    dense in $\Omega$;
  \item[\rmfamily(ii)]
    $\sup_{l\in\N} \frac{\diam E_{l+1}}{\diam E_l} <\frac12$;
  \item[\rmfamily(iii)] if $k,l\in\N$ satisfy\footnote{We use the notation
    $\lfloor z\rfloor :=\max \{n\in \setN \fdg n\le z\}$.} $\lfloor\sqrt[3]{l}\rfloor\le k<l$, then
    $E_k\cap E_l = \varnothing$.
  \end{enumerate}
\end{lem}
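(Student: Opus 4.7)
The plan is an adaptive greedy construction. Fix a countable dense subset $\{d_m\}_{m\in\N}$ of $\Omega$ and a map $\sigma\colon\N\to\N$ that attains every value infinitely often (for instance by concatenating the blocks $1;\,1,2;\,1,2,3;\,\ldots$). I will construct open balls $E_l=B(c_l,r_l)$ inductively so that the centers approximate the points $d_{\sigma(l)}$, the radii decay geometrically, and condition (iii) is enforced at every step.

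At step $l$ set $J_l:=\{k\in\N\,:\,\lfloor l^{1/3}\rfloor\le k<l\}$ and $F_l:=\bigcup_{k\in J_l}\overline{E_k}$. The induction maintains $r_l\le r_{l-1}/3$, hence $r_k\le r_1\cdot 3^{-(k-1)}$, and a geometric-series estimate gives
\[
|F_l|\,\lesssim \sum_{k\ge\lfloor l^{1/3}\rfloor}r_k^n \,\lesssim\, r_1^n\,3^{-n\lfloor l^{1/3}\rfloor}.
\]
Since this bound decays super-polynomially in $l$, for every $y\in\Omega$ and every $\eta>0$ the open set $B(y,\eta)\cap(\Omega\setminus F_l)$ has positive Lebesgue measure once $l$ is sufficiently large. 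Applying this to $y=d_{\sigma(l)}$ with $\eta=1/l$ (and making any admissible choice when $l$ is too small for this to succeed), pick $c_l\in\Omega\setminus F_l$ with $|c_l-d_{\sigma(l)}|<1/l$ whenever possible, and then choose $r_l>0$ smaller than $r_{l-1}/3$, smaller than $\dist(c_l,\partial\Omega)$, and smaller than $|c_l-c_k|-r_k$ for each $k\in J_l$. These are finitely many strictly positive upper bounds (the last ones because $c_l\notin\overline{E_k}$), so $r_l$ exists and $E_l\subset\Omega$ is nonempty.

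Properties (ii) and (iii) are immediate from the construction, since $r_{l+1}/r_l\le 1/3<1/2$ and $E_l\cap E_k=\varnothing$ was built into the choice of $r_l$ for every $k\in J_l$. For density (i), given $y\in\Omega$ and $\eps>0$ use density of $\{d_m\}$ to pick $m$ with $|d_m-y|<\eps/2$; among the infinitely many indices $l$ with $\sigma(l)=m$, the measure bound on $F_l$ forces $|c_l-d_m|$ to drop below $\eps/2$ for all sufficiently large such $l$, hence $|c_l-y|<\eps$.

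The only non-routine step is the super-polynomial decay of $|F_l|$: the whole point is that the geometric decay of the radii imposed by (ii) overwhelms the growing "forbidden window" of indices in (iii), so the complement $\Omega\setminus F_l$ is asymptotically dense in $\Omega$. The cube-root exponent in (iii) is not essential for this construction — any exponent strictly less than $1$ would work equally well — which suggests that its role is tuned to the counterexample built later, not to the combinatorics of this lemma.
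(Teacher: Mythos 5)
Your proof is correct and takes a genuinely different route from the paper's. The paper places each new center at an actual point of a fixed dense sequence --- always the smallest unused index whose point lies in the open set $G_l$ --- and proves density via a combinatorial dichotomy: either $\x^i$ is eventually chosen as a center, or it is eventually always outside $G_l$, in which case $\x^i\in\overline{E_{k_0}}$ for some $k_0\ge\lfloor\sqrt[3]{l+1}\rfloor$, and since $k_0\to\infty$ and the radii tend to zero, the center of $E_{k_0}$ converges to $\x^i$. You instead only approximate the dense points, cycle through them with a surjection $\sigma$ that hits every value infinitely often, and invoke Lebesgue measure: $|F_l|\lesssim 3^{-n\lfloor l^{1/3}\rfloor}$ decays super-polynomially, so it eventually falls below $|B(d_m,1/l)|\approx l^{-n}$, and the greedy choice $|c_l-d_{\sigma(l)}|<1/l$ then succeeds for all large $l$ with $\sigma(l)=m$. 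The paper's argument is purely topological and uses no measure theory, at the cost of a slightly delicate counting step (each dense point can be ``passed over'' at most finitely often); yours trades that for a routine volume estimate and is arguably more transparent. Both are sound; a tiny point your write-up leaves implicit is that some $c_l\in\Omega\setminus F_l$ must always exist even in the early steps where the $1/l$-proximity is not yet achievable --- but $F_l$ is compact and $\Omega$ is open and nonempty, hence noncompact, so $\Omega\setminus F_l\ne\varnothing$ automatically. Your closing observation agrees with the paper's Remark~\ref{R1}: the exponent $\tfrac13$ is irrelevant to the lemma itself and is tuned to make the final estimate~\eqref{10} in the theorem diverge.
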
  

\begin{proof}
  Let $\{\x^i\}_{i\in\N}$ be a~dense sequence of points in
  $\Omega$. Choose a~sufficiently small $\varrho\in(0,\frac12)$ so
  that $B(\x^1,\varrho)\subset\Omega$ and
  $\Omega\setminus \overline{B(\x^1,\varrho)}\ne\varnothing$, and
  define $E_1:=B(\x^1,\varrho)$. Next, let $l\in\N$ and suppose that
  the balls $E_k$ are defined for all $k=1,\ldots,l$ so that the open
  set
  \[
    G_l:= \Omega\setminus \overline{\bigcup_{k=\lfloor \sqrt[3]{l+1}\rfloor}^l E_k}
  \]
  is not empty. Since $\{\x^i\}$ is dense in $\Omega$ and
  $G_l\subset\Omega$, there exists the minimal index $i\in\N$ such
  that $\x^i\in G_l$ and $\x^i$ is not a~center of any ball $E_k$,
  $k=1,\ldots,l$. The set $G_l$ is open, hence we can choose
  a~sufficiently small number $\delta\in(0,\varrho \diam E_l)$
  such that $B(\x^i,\delta)\subset G_l$ and
  $G_l\setminus \overline{B(\x^i,\delta)}\ne\varnothing$.  Then we
  define $E_{l+1}:=B(\x^i,\delta).$ Notice that
  \[
    G_{l+1}:= \Omega\setminus \overline{\bigcup_{k=\lfloor
        \sqrt[3]{l+2}\rfloor}^{l+1} E_k} \supset \Omega\setminus
    \overline{\bigcup_{k=\lfloor \sqrt[3]{l+1}\rfloor}^{l+1} E_k} =
    G_l\setminus\overline{E_{l+1}} \ne \varnothing. 
  \]
  The process continues by induction, generating the system
  $\{E_l\}_{l\in\N}$.
  
  From the construction it follows that
  $\diam E_{l+1}\le \varrho\, {\diam E_l}$ for all $l\in\N$. Since
  $\varrho<\frac12$, this gives the property (ii). As a~consequence,
  we also get $\diam E_l\downarrow 0$ as $l\to\infty$.
	
  In the $(l+1)$-st step of the construction, $E_{l+1}$ is defined so
  that $E_{l+1}\subset G_l$, hence $\{E_l\}_{l\in\N}$ has the property
  (iii).
	
  It remains to prove (i). Let $\x\in\Omega$ and $\eps>0$. Thanks to
  the density of $\{\x^i\}_{i\in\N}$ there exists an~$i\in\N$ such that
  $|\x-\x^i|<\eps$. If $\x^i$ is a~center of a~ball from the system
  $\{E_l\}$, we are finished. Suppose that it is not the case. We
  claim that this means that there exists $l_0=l_0(i) \in \setN$ such
  that for every $l\ge l_0$ we have $\x^i\notin G_l$.  Indeed, if
  $\x^i\in G_l$ and $\x^i$ is not chosen as the center of $E_{l+1}$,
  then there is an $\x^m\in G_l$ with $m<i$ which was not yet chosen
  as the center of a~ball $E_k$ and which becomes the center of
  $E_{l+1}$. This situation may occur only $(i-1)$ times before $i$
  becomes the smallest index such that $\x^i$ was not yet used as the
  center of some $E_k$. Hence, for all sufficiently large $l\in\N$ we
  get $\x^i\notin G_l$. It yields
  $\x^i \in \overline{\bigcup_{k=\lfloor \sqrt[3]{l+1}\rfloor}^l
    E_k}$,
  and thus $\x^i \in \overline{E_{k_0}}$ for some
  $k_0\ge\lfloor \sqrt[3]{l+1}\rfloor$. This and
  $\diam E_l\downarrow 0$, imply that for a~sufficiently large $l$ we
  get $|\x^i-\y|<\eps$, where $\y$ is the center of $E_{k_0}$. Therefore,
  $|\x-\y|\le|\x-\x^i|+|\x^i -\y|< 2\eps$ and the proof is finished.
\end{proof}

The following proposition is a~special case of \cite[Lemma 2.3]{AG92}. 

\begin{prop}\label{12}
  Assume that $2\le n < q <\infty$ and that $\Omega\subset\R^n$ is
  a~bounded $C^{1,1}$\,domain. Let
  $\g\in W^{1-\frac1q,q}(\partial\Omega)$ be a~vector field such that
  $\g\cdot \bm{\nu} = 0$ on $\partial\Omega$, where $\bm{\nu}$ denotes
  the outer unit normal of $\partial\Omega$. Then there exists
  a~vector field $\h\in W^{2,q}(\Omega)$ such that $\div \h = 0$ in
  $\Omega$, $\Tr_{\partial\Omega}\h \equiv 0$ and
  $\partial_{\bnu}\h=\g$ on $\partial\Omega$. 
\end{prop}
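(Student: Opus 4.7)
The plan is to construct $\h$ in two stages: first realize the prescribed trace and normal derivative by an auxiliary field $\h_0\in W^{2,q}(\Omega)$, and then correct its divergence by subtracting a vector field $\w$ whose full Dirichlet data on $\partial\Omega$ vanish. To obtain $\h_0$, I would invoke the standard surjectivity, for a $C^{1,1}$ domain, of the trace map $\mathbf{v}\mapsto(\Tr_{\partial\Omega}\mathbf{v},\,\partial_{\bnu}\mathbf{v})$ from $W^{2,q}(\Omega)$ onto $W^{2-\frac{1}{q},q}(\partial\Omega)\times W^{1-\frac{1}{q},q}(\partial\Omega)$, and apply a bounded right inverse of it to the pair $(0,\g)$. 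Concretely, after flattening the boundary and using a partition of unity, an ansatz of the form $x_n\,\chi(x_n)\,\tilde\g(\mathbf{x}',x_n)$ with $\tilde\g$ a Sobolev extension of $\g$ into the half-space and $\chi$ a cut-off yields an $\h_0\in W^{2,q}(\Omega)$ with $\Tr_{\partial\Omega}\h_0=0$ and $\partial_{\bnu}\h_0=\g$.

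The next observation is that the divergence $f:=\div\h_0\in W^{1,q}(\Omega)$ automatically satisfies the correct compatibility conditions. Since $\Tr_{\partial\Omega}\h_0=0$, all tangential derivatives of $\h_0$ on $\partial\Omega$ vanish, so
\[
\Tr_{\partial\Omega}(\div\h_0)=(\partial_{\bnu}\h_0)\cdot\bnu=\g\cdot\bnu=0,
\]
which is precisely where the hypothesis $\g\cdot\bnu=0$ enters. Moreover, the divergence theorem yields $\int_\Omega f\,\dx=\int_{\partial\Omega}\h_0\cdot\bnu\,\mathrm{d}S=0$. Thus $f$ has vanishing trace and zero mean.

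The final step is to find $\w\in W^{2,q}(\Omega;\R^n)$ with $\div\w=f$, $\Tr_{\partial\Omega}\w=0$ and $\partial_{\bnu}\w=0$; then $\h:=\h_0-\w$ is solenoidal, lies in $W^{2,q}(\Omega)$, and retains the prescribed trace and normal derivative. The hard part is the existence of such $\w$: the classical Bogovskii construction only yields a right inverse of $\div$ mapping zero-mean $W^{1,q}(\Omega)$ data into $W^{2,q}(\Omega)\cap W^{1,q}_0(\Omega)$, not into $W^{2,q}_0(\Omega)$. Upgrading to the doubly-vanishing-trace setting requires exploiting the extra fact that $f$ has vanishing boundary trace. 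One route is to extend $f$ by zero across $\partial\Omega$ (which still produces a $W^{1,q}$ function) and run a Bogovskii construction on a slightly larger smooth domain, so that the support of the resulting correction is controlled near $\partial\Omega$; alternatively, one invokes \cite[Lemma 2.3]{AG92}, cited in the statement, which is purpose-built for this type of prescribed-normal-derivative solenoidal extension problem and is exactly where the $C^{1,1}$-regularity of $\partial\Omega$ is used.
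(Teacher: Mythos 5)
The paper offers no proof of this proposition; it simply cites \cite[Lemma 2.3]{AG92}, so your fallback ``alternatively, one invokes \cite{AG92}'' is exactly what the authors do. The constructive outline is where your proposal has content of its own, and that outline has a genuine gap in its final step.

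Your two-stage plan is a sound blueprint: lift $(0,\g)$ to $\h_0\in W^{2,q}(\Omega)$ via surjectivity of the second-order trace map on a $C^{1,1}$ domain, and your computation showing that $f:=\div\h_0\in W^{1,q}(\Omega)$ has vanishing boundary trace and zero mean is correct and is exactly the place where $\g\cdot\bnu=0$ enters. The problem is the proposed fix for the divergence correction. Extending $f$ by zero and running a Bogovskii construction on a slightly larger domain $\Omega'\supset\overline{\Omega}$ does not produce the required $\w$: the Bogovskii solution on $\Omega'$ vanishes (to the appropriate order) only on $\partial\Omega'$, not on the interior surface $\partial\Omega$, and there is no localization mechanism in the construction that would force the output and its normal derivative to vanish on a hypersurface strictly inside $\Omega'$ -- on a star-shaped domain the output is supported on the convex hull of the data's support and a fixed mollifier ball, so nothing confines it away from $\partial\Omega$. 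What you actually need, and hint at but do not supply, is the higher-order mapping property of the Bogovskii operator on $\Omega$ itself: for mean-zero data in $W^{k,q}_0(\Omega)$ the Bogovskii solution lies in $W^{k+1,q}_0(\Omega)$. This holds because the operator maps compactly supported smooth mean-zero functions into $C_0^\infty(\Omega)$ and satisfies the a priori bound from $W^{k,q}$ into $W^{k+1,q}$ on that class, so one concludes by density of $C_0^\infty(\Omega)\cap\{\text{mean zero}\}$ in $W^{k,q}_0(\Omega)\cap\{\text{mean zero}\}$. Applied with $k=1$ to your $f\in W^{1,q}_0(\Omega)$ with zero mean, this yields $\w\in W^{2,q}_0(\Omega)$ with $\div\w=f$, and then $\h:=\h_0-\w$ has all the required properties. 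Without this refined Bogovskii estimate (or the direct appeal to \cite{AG92}), your constructive route does not close.
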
	 

\begin{prop}\label{1}
  Assume that $2\le n < r <\infty$ and that $\Omega\subset\R^n$ is
  a~bounded $C^{1,1}$\,domain. Then there exists a~vector field
  $\w:\Omega\to\R^n$ such that
  $\w\in W^{2,r}(\Omega)\cap W^{1,\infty}_0(\Omega)$, both
  $|\nabla\w|\le 1$ and $\div \w=0$ hold in $\Omega$, and
  $\Tr_{\partial\Omega}\nabla\w\nequiv 0$.
\end{prop}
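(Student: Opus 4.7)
\medskip

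\noindent\textbf{Proof plan.}
The idea is to invoke Proposition~\ref{12} with a non-trivial tangential boundary datum and then rescale to achieve the pointwise gradient bound.

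First, I would build a non-trivial tangential datum $\g$ on $\partial\Omega$: pick any $\x_0 \in \partial\Omega$, use a $C^{1,1}$ local chart flattening $\partial\Omega$ near $\x_0$, and choose $\g \in C^\infty(\partial\Omega)$ compactly supported in a small neighborhood of $\x_0$, tangential to $\partial\Omega$, with $\g(\x_0)\ne 0$. Then $\g\cdot\bnu=0$ on $\partial\Omega$, $\g \in W^{1-\frac{1}{r},r}(\partial\Omega)$, and $\g \nequiv 0$. The existence of such a $\g$ is elementary in local coordinates; no Hairy-Ball obstruction appears since global non-vanishing is not required.

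Second, I would apply Proposition~\ref{12} with $q=r$ (the assumption $n<r$ makes this legal) to obtain $\h \in W^{2,r}(\Omega)$ satisfying $\div\h=0$ in $\Omega$, $\Tr_{\partial\Omega}\h \equiv 0$, and $\partial_{\bnu}\h = \g$ on $\partial\Omega$. Since $r>n$ and $\partial\Omega\in C^{1,1}$, the Sobolev embedding $W^{2,r}(\Omega) \hookrightarrow W^{1,\infty}(\Omega)\cap C^1(\overline\Omega)$ is available, so $M:=\|\nabla\h\|_{L^\infty(\Omega)}<\infty$, and $M>0$ because $\partial_{\bnu}\h=\g\nequiv 0$. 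Combined with the vanishing boundary trace, this yields $\h \in W^{1,\infty}_0(\Omega)$.

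Finally, I would rescale: put $\eps := 1/M$ and $\w := \eps\,\h$. Then $\w \in W^{2,r}(\Omega) \cap W^{1,\infty}_0(\Omega)$, $\div\w=0$ and $|\nabla\w|\le 1$ in $\Omega$, while $\partial_{\bnu}\w = \eps\,\g$ on $\partial\Omega$. Since $\Tr_{\partial\Omega}\w=0$ forces every tangential derivative of $\w$ to vanish on $\partial\Omega$, the boundary trace $\Tr_{\partial\Omega}\nabla\w$ is determined entirely by $\partial_{\bnu}\w = \eps\,\g \nequiv 0$, hence $\Tr_{\partial\Omega}\nabla\w \nequiv 0$, as required. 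No step of this plan presents a genuine obstacle: Proposition~\ref{1} is essentially a repackaging of Proposition~\ref{12} with the Sobolev embedding $W^{2,r}\hookrightarrow W^{1,\infty}$ (valid thanks to $r>n$) and a trivial scaling.
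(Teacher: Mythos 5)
Your proof is correct and follows essentially the same route as the paper: produce a non-trivial tangential boundary datum $\g \in W^{1-\frac1r,r}(\partial\Omega)$, invoke Proposition~\ref{12}, use the embedding $W^{2,r}(\Omega)\hookrightarrow W^{1,\infty}(\Omega)$ (valid because $r>n$) to get $\nabla\h$ bounded, and rescale. The only difference is the choice of $\g$: the paper takes $\g=\operatorname{grad}\varphi$ with $\varphi(\x)=x_1$ restricted to $\partial\Omega$ and checks directly that this Lipschitz field lies in $W^{1-\frac1r,r}(\partial\Omega)$, whereas you use a bump field in a local chart; one small imprecision in your version is the claim $\g\in C^\infty(\partial\Omega)$ --- since $\partial\Omega$ is only $C^{1,1}$, a tangential field transported from a flat chart is in general only Lipschitz, but Lipschitz regularity already suffices for $W^{1-\frac1r,r}$, so the argument stands.
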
	

\begin{proof}
  At any point $\x=(x_1,\ldots,x_n)^\top\in\partial\Omega$ define
  $\varphi(\x):=x_1$. Following \cite[Examples 4.2.2, 4.2.4]{baer},
  one can verify that $\varphi:\partial\Omega \to \R$ is a~$C^{1,1}$
  function and as such it satisfies that the gradient vector
  field\footnote{The gradient vector field
    $\operatorname {grad } \varphi$ of a function
    $\varphi:\partial\Omega \to \R$ is, at each point
    $\bx \in \partial \Omega$, the Riesz representative of the
    differential $d_\bx \varphi $ of $\varphi$ at $\bx $ with respect
    to the standard scalar product in $\setR^3$ restricted to the
    tangent plane $T_\bx \partial \Omega$. }
  $\operatorname {grad } \varphi\in C^{0,1}(\partial\Omega)$ and
  $\operatorname {grad } \varphi\cdot\bm{\nu}=0$ on
  $\partial\Omega$. (Notice that in \cite{baer} only the smooth
  situation for surfaces in $\setR^3$ is treated. However, the
  regularity conditions treated here may be obtained by a~careful tracking of the
  calculations in \cite{baer}, which can be easily generalized to
  the general case of a hypersurface in $\setR^n$.) Moreover, $\varphi$ is not constant on its
  domain. (This may be verified using the fact that $\Omega$ is
  a~(nonempty) bounded $C^{1,1}$\,domain.) Now we can take
  $0\nequiv\g:=\operatorname {grad } \varphi \in
  C^{0,1}(\partial\Omega)$, and get
  \begin{align*}
    \|\g\|_{W^{1-\frac1r,r}} 
    & =   \Bigg( \|\g\|_{L^r(\partial\Omega)} + \int_{\partial\Omega}\int_{\partial\Omega}
      \frac{|\g(\x)-\g(\y)|^{r}}{|\x-\y|^{r+n-2}} \,\textnormal{d}S(\x)\,\textnormal{d}S(\y)
      \Bigg)^\frac1r
    \\ 
    & \le \Bigg( \|\g\|_{L^\infty(\partial\Omega)} + C_\bg \,
      \int_{\partial\Omega}\int_{\partial\Omega} \frac1{|\x-\y|^{n-2}}
      \,\textnormal{d}S(\x)\,\textnormal{d}S(\y) \Bigg)^\frac1r. 
  \end{align*}
  A~standard calculation yields that the expression on the second line
  is bounded by a~constant depending on
  $\|\g\|_{L^\infty(\partial\Omega)}$ and the Lipschitz constant
  $C_\bg$ of $\g$. Hence, we have shown that
  $\g\in W^{1-\frac1r,r}(\partial\Omega)$.

  Proposition \ref{12} now provides a~function
  $\h\in W^{2,r}(\Omega)\cap W_0^{1,\infty}(\Omega)$ satisfying
  $\div\h= 0$ in $\Omega$ and $\partial_\bnu\h=\g\nequiv 0$ on
  $\partial\Omega$. (We have used the embedding
  $W^{2,r} \subset W^{1,\infty}$.) Then the function
  $\w:=\frac{\h}{\|\nabla\h\|_{\infty}}$ satisfies $|\nabla\w|\le 1$
  in $\Omega$. (This indeed can be stated at every point of $\Omega$
  since $\nabla\w \in W^{1,r}(\Omega)$ admits a~continuous
  representative therein.) Finally, since
  $\partial_\bnu\w = \frac{\g}{\|\nabla\w\|_\infty} \nequiv 0$ on
  $\partial\Omega$, it is verified that
  $\Tr_{\partial\Omega}\nabla\w \nequiv 0$ on $\partial\Omega$.
\end{proof} 

Before formulating the main result we recall when a
weight $\omega$ belongs to a~Muckenhoupt $A^p$ class.
\begin{df}
  Let $\Omega\subset\R^n$, $p\in(1,\infty)$, $p':=\frac{p}{p-1}$ and
  let $\omega$ be a~weight function, i.e., a locally integrable
  non-negative function on $\setR^n$. We say that $\omega $ satisfies
  the $A_p$ \emph{condition on} $\Omega$ if
  \[
    \sup_{B\subset\Omega} \ \fint_B \omega (\x)\dx \left( \fint_B
      \omega ^{1-p'}(\x)\dx \right)^{p-1} < \infty, 
  \]
  where $B\subset\Omega$ are balls. We denote by $A_p(\Omega)$ the class of all weights satisfying this condition. 
  Furthermore, we say that $\omega \in A_\infty(\Omega)$ if $\omega \in\bigcup_{p>1}A_p(\Omega)$. 
\end{df} 	 

\begin{thm}\label{14}
  Let $n\ge 2$ and $\Omega\subset\R^n$ be a~bounded
  $C^{1,1}$\,domain. Then there exists a~vector field
  $\u:\Omega\to \R^n$ such that:
  \begin{enumerate}
  \item[\rmfamily(i)] $\u\in W^{2,q}(\Omega)\cap W^{1,s}_0(\Omega)$
    for any $1<q<n$ and $1<s<\infty$;
  \item[\rmfamily(ii)] $\Tr_{\partial\Omega}\nabla\u \nequiv 0;$
  \item[\rmfamily(iii)] $\div \u \equiv 0$ in $\Omega$;
  \item[\rmfamily(iv)]
    $(1+|\Du|)^{p-2} \notin A_\infty(\Omega_0)$ for
    any $p\in(1,\infty)$, $p\ne 2$ and any open set
    $\Omega_0\subset\Omega$.
  \end{enumerate}			
\end{thm}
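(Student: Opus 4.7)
The plan is to construct $\u$ as a sum $\u = \w + \sum_{l\in\N} \u_l$, where $\w$ is the vector field from Proposition~\ref{1} carrying the nontrivial boundary trace of the gradient, and the $\u_l$ are divergence-free, compactly supported ``bumps'' placed inside the balls $E_l$ furnished by Lemma~\ref{2}. The idea is to arrange the bumps so that $|\Du|$ attains arbitrarily large values on a fixed positive fraction of a family of small balls whose centers are dense in every open subset of $\Omega$, while staying uniformly bounded on a complementary positive fraction---exactly the configuration that blows up every Muckenhoupt $A_p$ characteristic.

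\medskip

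Start with a fixed divergence-free ``model bump'' $\bm{\phi}\in C_c^\infty(\R^n;\R^n)$ for which $|\bD\bm{\phi}|\equiv c_0>0$ on a plateau $P\subset B(0,1)$ of positive measure. Such a $\bm{\phi}$ is given, e.g., by $\bm{\phi} := (\partial_2\psi,-\partial_1\psi,0,\ldots,0)$ with $\psi(\x) := x_1 x_2\,\eta(|\x|)$ for a radial cutoff $\eta\in C_c^\infty(\R)$; on the set $\{\eta\equiv 1\}$ the Hessian of $\psi$ collapses to $\partial_1\partial_2\psi=1$, yielding $|\bD\bm{\phi}|=\sqrt{2}$. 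Next, inductively choose, for each $l\in\N$, a sub-center $\x_l'\in E_l$ and a radius $\tilde r_l\in(0,r_l)$ so that the enlarged balls $2\widetilde E_l := B(\x_l',2\tilde r_l)\subset E_l$ are \emph{pairwise disjoint}; this is possible because, by Lemma~\ref{2}(iii), at step $l$ only finitely many earlier $E_k$ (those with $k<\lfloor\sqrt[3]{l}\rfloor$) can meet $E_l$, and by making $\tilde r_k$ small enough their cumulative measure inside $E_l$ is kept well below $|E_l|$. With amplitudes $M_l\nearrow\infty$ (say $M_l:=l$) and radii $\tilde r_l\le 2^{-l^2}$, define
\[
  \u_l(\x) := M_l\,\tilde r_l\;\bm{\phi}\Bigl(\frac{\x-\x_l'}{\tilde r_l}\Bigr),
\]
so that $|\bD\u_l|\equiv c_0 M_l$ on the rescaled plateau $P_l\subset\widetilde E_l$. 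The scaling identity
\[
  \|\u_l\|_{W^{2,q}}^{q}\;\asymp\; M_l^{q}\,\tilde r_l^{\,n-q}
\]
makes $\sum_l\u_l$ absolutely convergent in $W^{2,q}(\Omega)$ for every $q\in(1,n)$ and similarly in $W^{1,s}(\Omega)$ for every $s\in(1,\infty)$. Each $\u_l$ is divergence-free and compactly supported strictly inside $\Omega$, so $\u = \w + \sum_l\u_l$ is divergence-free and satisfies $\Tr_{\partial\Omega}\nabla\u = \Tr_{\partial\Omega}\nabla\w\nequiv 0$, establishing (i)--(iii).

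\medskip

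For (iv), fix $p\in(1,\infty)\setminus\{2\}$ and an open set $\Omega_0\subset\Omega$. By density of $\{\x_l'\}$ in $\Omega$ combined with $\tilde r_l\to 0$, extract a subsequence $l_k\to\infty$ with $B_k := 2\widetilde E_{l_k}\subset\Omega_0$. The pairwise disjointness of the $2\widetilde E_l$'s implies that on $B_k$ only the bump $\u_{l_k}$ is nonzero among the $\u_j$, so
\[
  |\Du| \equiv c_0\,M_{l_k} \text{ on } P_{l_k},
  \qquad
  |\Du| \le \|\nabla\w\|_\infty\le 1 \text{ on } B_k\setminus\widetilde E_{l_k},
\]
and both $|P_{l_k}|/|B_k|$ and $|B_k\setminus\widetilde E_{l_k}|/|B_k|$ are fixed positive constants. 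A direct lower estimate of the two averages in the $A_p$-characteristic, treating $p<2$ (where $(1+|\Du|)^{p-2}$ is small on $P_{l_k}$ while $(1+|\Du|)^{(2-p)/(p-1)}$ is large) and $p>2$ (symmetric) separately, then yields
\[
  \fint_{B_k}(1+|\Du|)^{p-2}\,\Bigl(\fint_{B_k}(1+|\Du|)^{(2-p)/(p-1)}\Bigr)^{p-1} \;\gtrsim\; c(p)\,M_{l_k}^{|p-2|}\;\longrightarrow\;\infty,
\]
so $(1+|\Du|)^{p-2}\notin A_p(\Omega_0)$ for every $p>1$, hence $\notin A_\infty(\Omega_0)$.

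\medskip

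The hardest part is the inductive selection of $(\x_l',\tilde r_l)$: one must simultaneously guarantee \emph{density} of $\{\x_l'\}$ in $\Omega$ (inherited from Lemma~\ref{2}), \emph{strict pairwise disjointness} of the enlarged balls $2\widetilde E_l$ (essential to isolate each bump on $B_k$), and \emph{rapid decay} of $\tilde r_l$ (needed for $W^{2,q}$-summability as $q\nearrow n$). Property (iii) of Lemma~\ref{2} is precisely the tool that makes this possible, since at each step $l$ only $O(\sqrt[3]{l})$ previously chosen balls can interact with $E_l$.
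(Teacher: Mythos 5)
Your construction follows the paper's overall plan---a series of rescaled divergence-free bumps $\sum_l\u_l$ placed in the balls furnished by Lemma~\ref{2}, plus the boundary piece $\w$ from Proposition~\ref{1}, and then a blow-up of the Muckenhoupt characteristic on balls clustering in every open subset---but there is a genuine gap at exactly the step you flag as ``the hardest part.'' You cannot select strictly pairwise disjoint sub-balls $2\widetilde E_l\subset E_l$ while the centers stay dense. Once $2\widetilde E_1=B(\x_1',2\tilde r_1)$ is fixed it is an open set, and since the centers $\{\x^l\}$ from Lemma~\ref{2} are dense, infinitely many $\x^l$ (with $l$ arbitrarily large) lie strictly inside $2\widetilde E_1$. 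For such $l$ one has $r_l\to 0$, so eventually $E_l=B(\x^l,2r_l)\subset 2\widetilde E_1$; then every $2\widetilde E_l\subset E_l$ is contained in $2\widetilde E_1$ and disjointness fails. The phrase ``by making $\tilde r_k$ small enough their cumulative measure inside $E_l$ is kept well below $|E_l|$'' is precisely where the argument breaks: it presupposes that $2\widetilde E_k$ is small relative to $E_l$, whereas for $l\gg k$ it is $E_l$ that is small relative to $2\widetilde E_k$, no matter how small (but positive) $\tilde r_k$ is. The only disjointness that comes for free from Lemma~\ref{2}(iii) is the graded one: for $\lfloor\sqrt[3]{l}\rfloor\le k<l$ the balls $E_k$ and $E_l$ are already disjoint, hence so are any sub-balls; but for $k<\lfloor\sqrt[3]{l}\rfloor$ overlap is unavoidable.

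This is exactly why the paper does not enforce strict disjointness. Instead it lives with the overlap and uses the key estimate~\eqref{odhad}: on the region $M_l\subset E_l$ the interference from the at most $\lfloor\sqrt[3]{l}\rfloor$ overlapping earlier bumps (with amplitudes $\approx k$) is of order $l^{2/3}$, while $|\bD\bu^l|\gtrsim l$ there, so $\u^l$ dominates; symmetrically, by~\eqref{8}, on $E_l\setminus\bigcup_{k\ge l}B_k$ the symmetric gradient is only $O(l^{2/3})$. The ratio $l/l^{2/3}\to\infty$ then still blows up the $A_\alpha$ characteristic. If you replace ``pairwise disjoint'' by this graded control and track the $O(l^{2/3})$ interference on both the large-value plateau and the small-value annulus of $E_l$, your argument becomes correct and essentially coincides with the paper's. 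One further, minor, point: your final display computes only the $A_p$ characteristic with the Muckenhoupt exponent set equal to the weight exponent $p$; to conclude $\notin A_\infty$ one must rule out $A_\alpha$ for every $\alpha>1$. Your estimate does in fact extend to all $\alpha>1$ with the same rate $M_{l_k}^{|p-2|}$, but this should be made explicit, as the paper does (it works with the separate exponent $\alpha$ and reduces the case $p<2$ to $p>2$ by a duality trick on the characteristic).
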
	

\begin{rem}
  In the assertion (iv), the function $(1+|\Du|)^{p-2}$ is understood
  as extended by zero to the whole $\R^n$ so that it corresponds to
  the formal definition of a~weight.  Notice that we intentionally
  prove the stronger property
  $(1+|\Du|)^{p-2} \notin A_\infty(\Omega_0)$ for any open
  $\Omega_0\subset\Omega$, to exclude the possibility that a
  localisation of the weight near the boundary would be a~Muckenhoupt
  weight, which in turn would imply the validity of a localized
  version of~\eqref{eq:korn}.
\end{rem}

\begin{proof}[Proof of Theorem \ref{14}]
  By Lemma \ref{2}, we find a~system of balls $\{E_k\}_{k\in\N}$,
  $E_k=B(\xk,2r_k)$ such that 
    $\bigcup_{k\in\N} E_k \subset\Omega$;
    $\{\xk\}_{k\in\N}$ is dense in $\Omega$;
    there exists an~$\varrho\in(0,\frac12)$ such that $r_{k+1}\le \varrho r_k$ for all $k\in\N$;
    if $k,l\in\N$ satisfy $\lfloor\sqrt[3]{l}\rfloor \le k<l$, then $E_k\cap E_l = \varnothing$.
  Without loss of generality we can suppose that $r_1\le\varrho$. Let
  us denote $B_k:=B(\xk,{r_k})$ for each $k\in\N$, and consider that
  each point $\xk$ is represented by $\xk=( \xjk,\ldots,\xnk)^\top$.

  For every $k\in\N$, $3\le m \le n$ and $\x\in\overline \Omega$ set
  \begin{align*}
    u_1^k(\x) &:= \frac{k}{r_k}\chi_{B_k}(\x) (\xd-\xdk) \left( \frac{r_k}2 + \frac{|\x-\xk|^2}{2r_k} -|\x-\xk| \right), \\
    u_2^k(\x) &:= - \frac{k}{r_k}\chi_{B_k}(\x) (\xj-\xjk) \left( \frac{r_k}2 + \frac{|\x-\xk|^2}{2r_k} -|\x-\xk| \right), \\
    u_m^k(\x) &:= 0,
  \end{align*}
  and define the vector field $\u^k:\overline \Omega\to\R^n$ with
  compact support in $\Omega$ by $\u^k:=( u^k_1,\ldots,u_n^k)^\top$.
	
  Let $k\in\N$, and $i,j\in\{1,\ldots,n\}$.  The classical partial
  derivatives of $\bu^k$ can be computed as 
  \begin{align*} 
    \di u_1^k(\x) & = \frac{k}{r_k}\chi_{B_k}(\x) \Bigg[ \delta_{2i} \left( \frac{r_k}2 + \frac{|\x-\xk|^2}{2r_k} - |\x-\xk|\right) \\
                  & \qquad + (x_2-x^k_2)(x_i-x^k_i) \left( \frac1{r_k} - \frac{1}{|\x-\xk|} \right) \Bigg]\,, \qquad \x\in\Omega\setminus \{\xk\}\,,\\
    \di u_2^k(\x) & = - \frac{k}{r_k}\chi_{B_k}(\x) \Bigg[ \delta_{1i} \left( \frac{r_k}2 + \frac{|\x-\xk|^2}{2r_k} - |\x-\xk|\right) \\
                  & \qquad + (x_1-x^k_1)(x_i-x^k_i) \left( \frac1{r_k} - \frac{1}{|\x-\xk|} \right) \Bigg]\,, \qquad \x\in\Omega\setminus \{\xk\}\,,\\
    \dj\di u_1^k(\x) & = \frac{k}{r_k}\chi_{B_k}(\x) \Bigg[ \left(
     \delta_{2i} (x_j-x^k_j) + \delta_{2j} (x_i-x^k_i) + \delta_{ji}(x_2-x^k_2)
     \right)\left(\frac1{r_k} - \frac{1}{|\x-\xk|} \right) \\ 			 
                  & \qquad +
                    \frac{(x_2-x^k_2)(x_i-x^k_i)(x_j-x^k_j)}{|\x-\xk|^3}\Bigg]\,,
    \qquad \x\in\Omega\setminus\big( \{\xk\}\cup \partial B_k\big)\,,\\
    \dj\di u_2^k(\x) & = - \frac{k}{r_k}\chi_{B_k}(\x) \Bigg[ \left( \delta_{1i} (x_j-x^k_j) + \delta_{1j} (x_i-x^k_i) + \delta_{ji}(x_1-x^k_1) \right)\left(\frac1{r_k} - \frac{1}{|\x-\xk|} \right) \\ 			
                  & \qquad + \frac{(x_1-x^k_1)(x_i-x^k_i)(x_j-x^k_j)}{|\x-\xk|^3}\Bigg]\,, \qquad \x\in\Omega\setminus\big( \{\xk\}\cup \partial B_k\big)\,,\\
    \di u_m^k(\x) & = \dj\di u_m^k(\x) \equiv 0, \qquad 3\le m \le
                    n\,, \qquad \bx \in \Omega\,.
  \end{align*}
  Let $i,j,m\in\{1,\ldots,n\}$. 
  The function $u^k_m$ is continuous in $\overline \Omega$, the
  function $\di u_m^k$ is continuous in ${\Omega}\setminus\{\xk\}$,
  and $\dj\di u_m^k$ is continuous in
  ${\Omega}\setminus\big(\{\xk\}\cup \partial B_k\big)$.  Recalling
  that if $\x\in B_k$, then $|x_i-x^k_i|\le |\x-\xk|\le r_{k}$, we
  obtain the following estimates:
  \begin{alignat}{2} 
    |u_m^k(\x)| & \le kr_k \chi_{B_k}(\x) \,, \qquad &&\x\in\overline \Omega\,, \label{3}\\
    |\di u_m^k(\x)| & \le 2k \chi_{B_k}(\x) \,, \qquad &&\x\in\Omega\setminus \{\xk\}\,, \label{4} \\
    |\dj\di u_m^k(\x)| & \le \frac{4k}{r_k} \chi_{B_k}(\x) \,, \qquad &&\x\in\Omega\setminus\big( \{\xk\}\cup \partial B_k\big)\,. \label{5}
  \end{alignat}
  The function $\di u_m^k$ belongs to $L^\infty(\Omega)$.  Hence,
  using a~classical argument, based on the approximation of $\Omega$ by
  \[
    \Omega\setminus B(\xk,\eps), \quad \eps\to 0_+, 
  \]
  and the continuity of $u_m^k$ in ${\Omega}$, one can show that $\di u_m^k$ is the weak derivative of $u^k_m$ on
  $\Omega$.  Next, also $\dj\di u_m^k $ belongs to $L^\infty
  (\Omega)$. Again, an argument using the approximation of $\Omega$ by
  \[
    \Omega\setminus\big( B(\xk,\eps) \cup \{\x\in\Omega \big|\
    r_k-\eps \le |\x-\xk| \le r_k+\eps \}\big), \quad \eps\to 0_+, 
  \]
  and the continuity of $\di u_m^k$ in ${\Omega}\setminus\{\xk\}$, shows
  that the function $\dj\di u_m^k$ is the second-order weak derivative
  of $u^k_m$ on $\Omega$.
  
  Let $l\in\N$. Using \eqref{3} and the assumption $r_1\le\varrho$, we get
  \[
    \sum_{k=1}^l \sup_{\x\in\overline \Omega}|u_m^k(\x)| \le \sum_{k=1}^l kr_k \le \sum_{k=1}^l k\varrho^k < \sum_{k\in\N} k\varrho^k <\infty.
  \]
  Hence, there exists a~function $\u^0\in C(\overline \Omega)$ such that
  $\sum_{k=1}^l \u^k\to\u^0$ absolutely uniformly on $\overline \Omega$ as
  $l\to\infty$.
	
  Let $s>1$. Applying \eqref{4}, we have 
  \begin{align*}
     \sum_{k=1}^l \left( \int_\Omega |\di u_m^k(\x)|^s \dx \right)^\frac1s &\le 2 \sum_{k=1}^l k \left( \int_\Omega  |\chi_{B_k}(\x)| \dx \right)^\frac1s  \\
    & = 2 \sum_{k=1}^l k|B_k|^\frac1s \lesssim \sum_{k=1}^l kr_k^{\frac ns} \le \sum_{k=1}^l k\varrho^{\frac{nk}s} < \sum_{k\in\N} k\varrho^{\frac{nk}s} < \infty.
  \end{align*}
  For every $l\in\N$ we have
  $\sum_{k=1}^l \u^k \in W^{1,s}_0(\Omega)$, and
  \[
    \sum_{k\in\N} \|\u^k\|_{W^{1,s}_0(\Omega)} \approx \sum_{k\in\N} \|\nabla\u^k\|_{L^s(\Omega)} \lesssim \sum_{k\in\N} k\varrho^{\frac{nk}s} < \infty.
  \]
  Therefore, $\big\{\sum_{k=1}^l\u^k\big\}_{l\in\N}$ is a~Cauchy
  sequence in $W^{1,s}_0(\Omega)$. Since
  $\sum_{k=1}^l \u^k \xrightarrow{l\to\infty} \u^0$ pointwise in
  $\Omega$, it is also true that
  $\sum_{k=1}^l \u^k\xrightarrow{l\to\infty} \u^0$ in
  $W^{1,s}_0(\Omega)$. Moreover, $\div \u^0\equiv 0$ since
  $\div \sum_{k=1}^l \u^k\equiv 0$ for all $l\in\N$.
	
  Next, let $q\in(1,n)$. Using \eqref{5}, we get
  \begin{align*}
    \sum_{k=1}^l \left( \int_\Omega |\dj\di u_m(\x)|^q \dx
    \right)^\frac1q 
    & \le 4 \sum_{k=1}^l \frac{k}{r_k} \left( \int_\Omega  |\chi_{B_k}(\x)| \dx
      \right)^\frac1q = 4 \sum_{k=1}^l \frac{k|B_k|^\frac 1q}{r_k} 
    \\ 
    & \lesssim \sum_{k=1}^l kr^{\frac nq -1}_k \le \sum_{k=1}^l
      k\varrho^{\left(\frac nq -1\right)k} < \sum_{k\in\N}
      k\varrho^{\left(\frac nq -1\right)k} < \infty. 
  \end{align*}
  For every $l\in\N$ one has
  $\sum_{k=1}^l \u^k \in W^{2,q}_0(\Omega)$, and
  \[
    \sum_{k\in\N} \|\u^k\|_{W^{2,q}_0(\Omega)} \approx \sum_{k\in\N} \|\nabla^2\u^k\|_{L^q(\Omega)} \lesssim \sum_{k\in\N} k\varrho^{\left(\frac nq -1\right)k} < \infty.
  \]
  It implies that $\big\{\sum_{k=1}^l\u^k\big\}_{l\in\N}$ is a~Cauchy
  sequence in $W^{2,q}_0(\Omega)$. Taking the pointwise convergence
  into account, we obtain that
  $\sum_{k=1}^l \u^k \xrightarrow{l\to\infty} \u^0$ in
  $W^{2,q}_0(\Omega)$.
	
  We have shown that
  $\mathbf{u}^0 \in C(\overline \Omega) \cap W^{1,s}_0(\Omega) \cap
  W^{2,q}_0(\Omega)$
  for any $s\in(1,\infty)$ and $q\in(1,n)$. Now consider the field
  $\w\in W^{2,r}(\Omega)\cap W^{1,\infty}_0(\Omega) \subset
  W^{2,q}(\Omega)\cap W^{1,\infty}_0(\Omega)$ (where $r>n$)
  granted by Proposition \ref{1}, and define $\u:=\u^0 +
  \w$. Obviously, the field $\u$ satisfies (i), (ii) and (iii).
	
  It remains to verify (iv). There exists a~constant $\eps\in(0,1)$ dependent only on $n$ and such that for any $l\in\N$ one has 
  \[
    \left| G_l \right| \ge \frac23 |B_l|,
  \]
  where
  \[
    G_l := \left\{ \x\in B_l\fdg  |x_1-x^l_1|\ge r_{l}\eps,\ |x_2-x^l_2|\ge r_{l}\eps,\ |\x-\x^l|\le r_{l}(1-\eps) \right\}.
  \]
  For each $l\in\N$ define
  $M_l := G_l\setminus \bigcup_{k=l+1}^\infty B_k$. Recall that
  $\u^0=\sum_{k\in\N} \u^k$ (convergence in $W^{1,s}$) and
  $\u^k = \u^k\chi_{B_k}$ for every $k\in\N$. Hence,
  \begin{equation}\label{6}
    \Du^0(\x)= \sum_{k=1}^l \Du^k(\x)\chi_{B_k}(\x)\quad  \textnormal{for a.e.\ } \x \in \Omega\setminus \bigcup_{k=l+1}^\infty B_k.
  \end{equation}
  Let $l\in\N$, $l\ge 2$. From the properties of the system $\{E_k\}$
  it follows that $B_k\cap M_l \subset E_k\cap E_l = \varnothing$ for
  all $k,l\in\N$ such that $\sqrt[3]{l}<k<l$. Together with \eqref{6}
  it gives
  \begin{equation}\label{11}
    \Du^0(\x) = \Du^l(\x) + \sum_{k=1}^{\lfloor\sqrt[3]{l}\rfloor} \Du^k(\x) \quad \textnormal{for\ a.e.\ } \x\in M_l.
  \end{equation}
  Therefore, we may write
  \begin{align*}
    |\Du^0(\x)| & = \Big| \Du^l(\x) +
    \sum_{k=1}^{\lfloor\sqrt[3]{l}\rfloor} \Du^k(\x)
    \Big| \ge |\Du^l(\x)| - \Big|
   \sum_{k=1}^{\lfloor\sqrt[3]{l}\rfloor} \Du^k(\x)\Big| 
    \\ 
    & \ge |\Du^l(\x)| - \sum_{k=1}^{\lfloor\sqrt[3]{l}\rfloor}
      |\nabla\u^k(\x)| \ge |\partial_1 u_1^l(\x)| - n
      \sum_{k=1}^{\lfloor\sqrt[3]{l}\rfloor} \max_{1\le
      i,j \le n}|\di u_j^k(\x)| . 
  \end{align*}
  for a.e.~$\x\in M_l$. Since $\x\in M_l\subset G_l$, we may write
  \[
    |\partial_1 u_1^l(\x)|  = \frac{l}{r_l} |x_2-x^l_2||x_1-x^l_1|\left( \frac{1}{|\x-\x^l|} - \frac1{r_l} \right)
    \ge \eps^2 l r_l  \left( \frac1{r_l(1-\eps)} - \frac1{r_l} \right) 						  	
    \ge \frac{\eps^3 l}{1-\eps}.
  \]
  Using this estimate and \eqref{4}, we obtain
  \begin{equation}\label{odhad}
    |\Du^0(\x)| \ge |\partial_1u_1^l(\x)| - n
    \sum_{k=1}^{\lfloor\sqrt[3]{l}\rfloor} \max_{1\le i \le n}|\di
    u^k(\x)| \ge \frac{\eps^3 l}{1-\eps} - 2 n
    \sum_{k=1}^{\lfloor\sqrt[3]{l}\rfloor} k \ge \frac{\eps^3 l}{1-\eps}
    - 2 nl^{\frac23} 
  \end{equation}		
  for a.e.~$\x\in M_l$. Moreover, since $\varrho^n < 2^{-n} \le \frac14$, we have
  \[
    \frac{ \Big|\bigcup_{k=l+1}^\infty B_k \Big|}{|B_l|} \le \sum_{k=l+1}^\infty \frac{ |B_k| }{|B_l|} = \sum_{k=l+1}^\infty \varrho^{n(k-l)} \le \frac13. 
  \]
  This in turn yields
  \[
    |M_l| \ge |G_l|-\Big| \bigcup_{k=l+1}^\infty B_k \Big| \ge \frac13 |B_l| = \frac1{3\cdot2^n} |E_l|.
  \]
  Similarly, we can also derive the estimate
  $\Big|\bigcup_{k=l}^\infty B_k \Big|\le \frac43 |B_l|$, from which
  it follows that
  \begin{equation}\label{7}
    \Big| E_l \setminus \bigcup_{k=l}^\infty B_k \Big| \ge |E_l| - \frac43 |B_l| = \left( 1 - \frac1{3\cdot 2^{n-2}} \right)|E_l|.
  \end{equation}
  Since $\u^l=\u^l\chi_{B_l}$, by using \eqref{11} we obtain the
  estimate
  \begin{equation}\label{8}
    |\Du^0(\x)| =  \sum_{k=1}^{\lfloor\sqrt[3]{l}\rfloor} | \Du^k(\x)
    | \le n \sum_{k=1}^{\lfloor\sqrt[3]{l}\rfloor} \max_{1\le i,j\le
      n} | \partial_i u_j^k(\x) | \le 2n
    \sum_{k=1}^{\lfloor\sqrt[3]{l}\rfloor} k \le 2nl^{\frac23},   
  \end{equation}
  valid for a.e.~$\x\in E_l\setminus\bigcup_{k=l}^\infty B_k$.
  Furthermore, considering that $|\nabla\w|\le 1$ a.e.~in $\Omega$, we
  also have the relation
  \[
    1 + |\Du| = 1 + |\Du^0 + \mathbf{Dw}| \ge 1 + |\Du^0| - |\mathbf{Dw}| \ge 1 + |\Du^0| - |\nabla \w| \ge |\Du^0|
  \]
  a.e.~in $\Omega$. Now we have prepared everything to show that $(1
  +|\bD\bu|)^{p-2}$ does not satisfy the $A_\alpha$ condition for any
  $\alpha>1$. First, for $p>2$ we have
  \begin{align*}
    \int_{E_l}(1+|\Du(\x)|)^{p-2} \dx & \ge \int_{E_l}|\Du^0(\x)|^{p-2} \dx \\
                                      & \ge \int_{M_l}|\Du^0(\x)|^{p-2} \dx \\
                                      & \ge \left( \frac{\eps^3 l}{1-\eps} - 2 nl^{\frac23} \right)^{p-2} |M_l| \\
                                      & \ge \frac1{3\cdot2^n}	\left( \frac{\eps^3 l}{1-\eps} - 2 nl^{\frac23} \right)^{p-2} |E_l|.			  	
  \end{align*}
  Hence, 
  \begin{equation}\label{9}
    \fint_{E_l}(1+|\Du(\x)|)^{p-2} \dx \ge C_1 (C_2l-l^{\frac23})^{p-2},
  \end{equation}	
  where $C_1, C_2$ are positive constants depending only on $n$ and
  $p$. Next, applying \eqref{8} and \eqref{9} we obtain for $\alpha >1$
  \begin{align*}
    \int_{E_l}(1+|\Du(\x)|)^{(p-2)(1-\alpha')} \dx 
    & \ge \int_{E_l\setminus \bigcup_{k=l}^\infty
      B_k}(1+|\Du(\x)|)^{(p-2)(1-\alpha')}      \dx  
    \\ 
    & \ge \int_{E_l\setminus \bigcup_{k=l}^\infty B_k}(1+|\nabla\w(\x)|+|\Du^0(\x)|)^{(p-2)(1-\alpha')} \dx  \\
    & \ge \big(2 + 2nl^{\frac23}\big)^{(p-2)(1-\alpha')} \Big| E_l\setminus \bigcup_{k=l}^\infty B_k \Big|  \\
    & \ge \big(2 + 2nl^{\frac23}\big)^{(p-2)(1-\alpha')} \left( 1 - \frac1{3\cdot 2^{n-2}} \right)|E_l|.
  \end{align*}
  Thus,
  \[
    \left( \fint_{E_l} (1+|\Du(\x)|)^{(p-2)(1-\alpha')} \dx
    \right)^{\alpha-1} \ge C_3^{\alpha-1}(C_4 + l^{\frac23})^{2-p}, 
  \]
  where $C_3, C_4$ are positive constants depending only on $n$ and
  $p$. By combining the obtained estimates we get
  \begin{equation}\label{10}
    \fint_{E_l}(1+|\Du(\bx)|)^{p-2}\dx  \left( \fint_{E_l}
      (1+|\Du(\bx)|)^{(p-2)(1-\alpha')} \dx\right)^{\alpha-1} \hspace{-5pt} \ge
    \frac{C_1}{C_3^{\alpha-1}} \! \left( \frac{C_2l-l^{\frac23}}{C_4 +
        l^{\frac23}} \right)^{p-2} \hspace{-5pt}
    \xrightarrow{l\to\infty} \infty.  
  \end{equation}
  Let $\Omega_0\subset\Omega$ be an~open set. Since the set of centers
  of the balls $E_l$ is dense in $\Omega$ and
  $\diam E_l \downarrow 0$, there exists an~infinite subsequence
  $\{E_{l_k}\}_{k\in\N}$ such that $E_{l_k}\subset\Omega_0$ for all
  $k\in\N$. The property \eqref{10} then implies that the function
  $(1+|\Du|)^{p-2}$ does not satisfy the $A_\alpha$ condition on
  $\Omega_0$ for any $p>2$ and $\alpha>1$.
	
  Now, let $p\in(1,2)$ and $\alpha>1$. Define
  $p_0:=\frac{p-2}{1-\alpha}+2$. Since $p_0>2$ now, from \eqref{10} we
  get
  \begin{align*}
    & \fint_{E_l}(1+|\Du(\x)|)^{p-2} \dx\ \left( \fint_{E_l} (1+|\Du(\x)|)^{(p-2)(1-\alpha')} \dx \right)^{\alpha-1} \\
    & = \left( \left( \fint_{E_l}(1+|\Du(\x)|)^{(p_0-2)(1-\alpha)} \dx
      \right)^{\alpha'-1}  \fint_{E_l} (1+|\Du(\x)|)^{p_0-2} \dx
      \right)^{\alpha-1} \xrightarrow{l\to\infty} \infty.  
  \end{align*}
  By the same argument as in the previous part of the proof, we
  conclude that the function $(1+|\Du|)^{p-2}$ does not belong to the
  $A_{\alpha'}(\Omega_0)$ class for any $p\in(1,2)$, $\alpha>1$ and
  any open set $\Omega_0\subset\Omega$. This completes the proof.
\end{proof}	

\begin{rem}\label{R1}
	Estimates \eqref{odhad} and \eqref{8} should clarify the role of the bound $\lfloor\sqrt[3]{l}\rfloor$ from Lemma \ref{2}. This bound was chosen with the purpose of making the right-hand side in the final estimate \eqref{10} divergent.
\end{rem}

\begin{rem}\label{13}
  Notice that the function $\u$ from Theorem \ref{14} satisfies
  \eqref{eq:reg}. One easily sees that $\bF(\bD\bu) \in L^2(\Omega)$
  is equivalent to $\bu \in W^{1,p}(\Omega)$. This is satisfied since
  $\bu \in W^{1,s}(\Omega)$, $1<s<\infty$.  Moreover, one can show
  (cf.~\cite{bdr-7-5}) that
  \begin{equation*}
    \abs{\nabla \bF(\bD\bu)}^2 \approx  (1+|\Du|)^{p-2}|\nabla
    \bD\u|^2\approx  (1+|\Du|)^{p-2}|\nabla^2\u|^2\,, 
  \end{equation*}
  where we also used the algebraic identity 
  \begin{align*}
    \partial_j\partial_kv^i= \partial _j D_{ik}\bv +\partial _k D_{ij}\bv -\partial _i D_{jk}\bv \,,
  \end{align*}
  valid for sufficiently smooth vector fields $\bv$. Thus, it is
  sufficient to verify that
  \[
    \int_\Omega (1+|\Du|)^{p-2}|\nabla^2\u|^2 \dx< \infty
  \]
  for any $p\in (1,\infty)$. If $1<p\le 2$, then
  \[
    \int_\Omega (1+|\Du|)^{p-2}|\nabla^2\u|^2  \dx\le \int_\Omega |\nabla^2\u|^2  \dx< \infty,
  \]
  since $\nabla^2\u\in L^q(\Omega)$ for any $1<q<n$, in particular
  $q=2$.  If $p>2$, then
  \[
    \int_\Omega \! (1+|\Du|)^{p-2}|\nabla^2\u|^2  \dx \le 2^p \!\int_\Omega \!
    |\nabla^2\u|^2  \dx+ 2^p \! \left( \int_\Omega \!|\nabla^2\u|^\frac52  \dx
    \right)^\frac45 \!\!\left( \int_\Omega \!|\Du|^{5(p-2)}  \dx\right)^\frac15 \!<
    \infty,
  \]
  since $\u\in W^{2,q}(\Omega)$ for any $1<q<n$. More precisely, one
  can choose the $q\in \big[\frac52,n\big)$ so that
  $\frac{nq}{n-q} \ge 5(p-2)$ and thus
  $\nabla^2\u \in L^{\frac52}(\Omega)$ and
  $\Du \in W^{1,q}(\Omega) \subset L^{\frac{nq}{n-q}}(\Omega) \subset
  L^{5(p-2)}(\Omega)$.
\end{rem}		

\begin{rem}
  Let $\bu$ be the vector field constructed in Theorem \ref{14} in the
  case $n=3$. Setting $\ff:= - \divo \bS(\bD \bu)$, where $\bS$ is
  given by \eqref{eq:stress} with $\delta=1$, we see that $\bu$ and
  $\pi\equiv 0$ are a weak solution of the $p$-Stokes system
  \eqref{eq} with zero Dirichlet boundary condition for any
  $p \in (1,\infty)$. From Remark \ref{13} we know that $\bu$
  possesses the natural regularity \eqref{eq:reg}. Computations
  similar to the ones in Remark \ref{13} show that
  $\ff \in L^2(\Omega)$ for any $p \in (1,\infty)$. For $p \ge 2$,
  this regularity of the right-hand side $\ff$ is exactly the requirement
  posed in the papers dealing with the regularity of \eqref{eq} which
  were mentioned in the introduction. For $p\in (1,2]$, the usual
  requirement on the right-hand side of \eqref{eq} in the regularity
  investigations is $\ff \in L^{p'}(\Omega)$. This is fulfilled for
  the above defined $\ff$ for $p \in \big(\frac32,2\big]$.
\end{rem}
\begin{rem}
  Using the same notation as in the previous remark, one sees that
  $\bu$ is for any $p \in (1,\infty)$ also a solution of \eqref{eq}
  without the divergence constraint and the resulting pressure
  gradient, but equipped with zero Dirichlet boundary condition. From
  Remark \ref{13} we know that $\bu$ possesses the natural regularity
  \eqref{eq:reg}. However, for this modified problem this regularity
  property is proved in \cite{SS00}, \cite{br-plasticity} for any weak
  solution and any right-hand side $\ff \in L^{p'}(\Omega)$.
\end{rem}
	
\def\cprime{$'$} \def\cprime{$'$} \def\cprime{$'$}
\ifx\undefined\bysame
\newcommand{\bysame}{\leavevmode\hbox to3em{\hrulefill}\,}
\fi

\end{document}